\newtheorem{lemma}{Lemma}[section]
\newtheorem{theorem}{Theorem}[section]
\theoremstyle{remark}
\newtheorem{remark}{Remark}[section]
\DeclareMathOperator{\grad}{grad}
\DeclareMathOperator{\Hess}{Hess}
\numberwithin{equation}{section}
\begin{document}
\title{Asymptotics for characteristic polynomials of Wishart type products of complex Gaussian and truncated unitary random matrices}
\author{Thorsten Neuschel, Dries Stivigny
  \thanks{Department of Mathematics, KU Leuven, Celestijnenlaan 200B box 2400, BE-3001 Leuven, Belgium. This work is supported by KU Leuven research grant OT\slash12\slash073 and the Belgian Interuniversity Attraction Pole P07/18. E-mail: Thorsten.Neuschel@wis.kuleuven.be, Dries.Stivigny@wis.kuleuven.be}}

 \date{\today}

\maketitle

\paragraph{Abstract} Based on the multivariate saddle point method we study the asymptotic behavior of the characteristic polynomials associated to Wishart type random matrices that are formed as products consisting of independent standard complex Gaussian and a truncated Haar distributed unitary random matrix. These polynomials form a general class of hypergeometric functions of type \(_2 F_r\). We describe the oscillatory behavior on the asymptotic interval of zeros by means of formulae of Plancherel-Rotach type and subsequently use it to obtain the limiting distribution of the suitably rescaled zeros. Moreover, we show that the asymptotic zero distribution lies in the class of Raney distributions and by introducing appropriate coordinates elementary and explicit characterizations are derived for the densities as well as for the
distribution functions.

\paragraph{Keywords} Asymptotics; multivariate saddle point method; asymptotic distribution of zeros; macroscopic density of eigenvalues; Plancherel-Rotach formula; Raney distribution; Ginibre random matrices; complex Gaussian matrices, truncated unitary matrices; average characteristic polynomials; generalized hypergeometric polynomials

\paragraph{Mathematics Subject Classification (2010)}   30E15 ; 41A60 , 41A63
\section{Introduction}

The behavior of the eigenvalues of random matrices is a large subject of research in random matrix theory. Recently, the study of products of random matrices gained particular interest (see, e.g., \cite{Adhi}, \cite{Akemann1}, \cite{Burda1}, \cite{Forrester0}, \cite{Forrester1}, \cite{KuijlaarsZhang}, \cite{KuijlaarsStivigny}, \cite{Neuschel2}). Let \(r\in\mathbb{N}=\{1, 2, 3, \ldots\}\) be an arbitrary positive integer and denote by \(G_1, G_2, \ldots, G_r\) independent standard complex Gaussian random matrices (matrices of this kind are called Ginibre random matrices). Moreover, let each matrix \(G_j\) be of dimension \(N_j \times N_{j-1}\) and let the matrix \(Y_r\) be defined as the product
\begin{equation}\label{1}Y_r=G_r G_{r-1} \cdots G_1.
\end{equation}
Assuming \(N_0 = \min\{N_0, \ldots, N_r\}\) and writing \(n=N_0\), let us consider the \(n \times n\)-dimensional matrix \(Y_r^{\ast} Y_r\), where \(Y_r^{\ast}\) denotes the conjugate transpose of \(Y_r\). It was shown by Akemann, Ipsen and Kieburg in \cite{Akemann2} that the eigenvalues of \(Y_r^{\ast} Y_r\) form a determinantal point process with a correlation kernel expressible in terms of Meijer G-functions. Moreover, Kuijlaars and Zhang showed in \cite{KuijlaarsZhang} that this point process can be interpreted as a multiple orthogonal polynomial ensemble. The average characteristic polynomials of the matrices \(Y_r^{\ast} Y_r\) are given as generalized hypergeometric polynomials of the form
\[ (-1)^n \prod_{l=1}^{r} (\nu_l +1)_n  ~ _{1} F_{r} \left(\begin{matrix}
 & -n& \\ \nu_1 +1, & \ldots, &\nu_r +1 & \end{matrix}\,\bigg\vert\,  x \right),
\]
where \(\nu_j = N_j -N_0\) for \(j\in\{1,\ldots,r\}\) (see \cite{KuijlaarsZhang}, \cite{Akemann2}). These polynomials have been studied in \cite{Neuschel2} with respect to their behavior on the region of zeros (after suitable rescaling) in form of an asymptotic formula of Plancherel-Rotach type. Moreover, using this representation it was shown that the asymptotic zero distribution is given by the Fuss-Catalan distribution of order \(r\) (which matches with the known fact that the macroscopic density of the eigenvalues of the matrices \(Y_r^{\ast} Y_r\) is given by the Fuss-Catalan distribution).

As well as the consideration of products solely consisting of Ginibre matrices it is of interest to study products involving factors with different distributions. This has been done, for instance, in \cite{Forrester0} by Forrester, where products of complex Gaussian and inverse complex Gaussian matrices are studied. In this context it is also interesting to involve Haar distributed unitary factors (see, e.g., \cite{Forrester3}). Recently, in \cite{KuijlaarsStivigny} the authors considered the squared singular values of products of the type in \eqref{1} in which the first matrix \(G_1\) is replaced by a truncated unitary random matrix \(X\). More precisely, let \(r>1\), \(U\) be a Haar distributed unitary \(l\times l\) matrix and let \(X\) be the \((n+\nu_1)\times n\) upper left block of \(U\), where \(\nu_1 \geq 0\) and \(l\geq 2n+\nu_1\). Now let us consider the product of independent matrices
\begin{equation}\label{2}Z_r  =G_r G_{r-1} \cdots G_2 X,
\end{equation}
where \(G_j\) are Ginibre matrices of size \((n+\nu_j)\times(n+\nu_{j-1})\), \(\nu_j\geq 0\). It is shown in \cite{KuijlaarsStivigny} that the squared singular values of \(Z_r\) form a determinantal point process with joint probability distribution on \((0,\infty)^n\) given by a density (with respect to the Lebesgue measure) proportional to
\[\prod_{1\leq j<k\leq n}(x_k-x_j) \det\left[w_{k-1}(x_j)\right]_{j,k=1}^n,\]
where the functions \(w_k\) are given as Meijer G-functions by
\[w_{k}(x)=G_{1,r}^{r,0} \left(\begin{matrix}
   l-2n+1+k \\ \nu_r ,  \ldots, \nu_{2},  \nu_1 +k \end{matrix}\,\bigg\vert\,  x \right).\]
Moreover, the average characteristic polynomials of the Wishart type random matrices \(Z_r^{*}Z_r\) are given by the generalized hypergeometric polynomials \cite{KuijlaarsStivigny}
\[(-1)^n \prod_{i=1}^{r} \frac{\Gamma(n+1+\nu_i)}{\Gamma(\nu_i +1)} \frac{\Gamma(l-n+1)}{\Gamma(l+1)}  ~ _{2} F_{r} \left(\begin{matrix}
  -n,&l-n+1 \\ \nu_1 +1, & \ldots, &\nu_r +1 & \end{matrix}\,\bigg\vert\,  x \right). \]
In this paper we study the behavior of these polynomials and their zeros for large values of \(n\), where we consider \(\nu_j\geq 0\) and \(\kappa =l-2n+1\geq 0\) as fixed integers. This means that the dimensions of the associated matrices grow to infinity in a way described by the fixed differences \(\nu_j\) of the sizes. Moreover, for the sake of a more convenient analysis, we consider one of the parameters \(\nu_j\) to be zero, where we (arbitrarily) choose \(\nu_r=0\). Thus, here the average characteristic polynomials are given by
\begin{align*}P_n(x)=&(-1)^n \prod_{i=1}^{r} \frac{\Gamma(n+1+\nu_i)}{\Gamma(\nu_i +1)} \frac{\Gamma(\kappa+n)}{\Gamma(\kappa +2n)}  ~ _{2} F_{r} \left(\begin{matrix}
  &-n,&n+\kappa \\ \nu_1 +1, & \ldots, &\nu_{r-1} +1, & 1 \end{matrix}\,\bigg\vert\,  x \right)\\
  =& (-1)^n n! \prod_{i=1}^{r-1} \Gamma(n+1+\nu_i) \frac{\Gamma(\kappa+n)}{\Gamma(\kappa +2n)}  F_n(x),
\end{align*}
where we introduce the polynomials \(F_n\) by
\begin{equation}\label{3}F_n(x)=\sum_{k=0}^n \binom{n}{k}\frac{(n+\kappa)_k (-x)^k}{k! (\nu_1+k)! \cdots (\nu_{r-1}+k)!}.
\end{equation}
In many cases there is a close connection between random matrices and their average characteristic polynomials. For instance, one expects that (after proper rescaling) the limiting distribution of the zeros of the characteristic polynomials coincides with the macroscopic density of eigenvalues (see, e.g., \cite{Hardy}). It will emerge from our analysis that this expectation holds true for the class of Wishart type random matrices \(Z_r^{*}Z_r\) (see \eqref{2}) under consideration here.

The paper is structured as follows: After stating some auxiliary results in Section 2 we derive the large \(n\) behavior of the suitably rescaled polynomials \(F_n\) on the region of zeros in form of an asymptotic formula of Plancherel-Rotach type. More precisely, for \(r>1\), \(\nu_1,\ldots,\nu_{r-1} \in \mathbb{N}_0\) arbitrary non-negative integers and \(\kappa =l-2n+1\geq 0\) as described above, we show in Theorem \ref{PRA} the following: If we parameterize the asymptotic interval of zeros by \(x=\sigma(\varphi)\) using
\[\sigma:\left(0,\frac{\pi}{r+1}\right)\rightarrow\left(0,\frac{(r+1)^{(r+1)/2}}{2(r-1)^{(r-1)/2}}\right),\]
\[\sigma(\varphi)=\frac{\left(\sin{(r+1)\varphi}\right)^{(r+1)/2}}{\sin{2\varphi}\left(\sin{(r-1)\varphi}\right)^{(r-1)/2}},\] 
then we obtain the asymptotic formula of Plancherel-Rotach type

\begin{align}\nonumber  F_n (n^{r-1} x)=&\frac{2(-1)^n}{(2\pi)^{r/2}}\left(a(\varphi)n\right)^{-r/2-(\nu_1+\ldots+\nu_{r-1})}\left(1+2a(\varphi)\cos\varphi +a(\varphi)^2\right)^{\kappa/2} \\\nonumber
\times&\left((r+1)^2-2(r^2-1)a(\varphi)^2\cos{2\varphi}+(r-1)^2 a(\varphi)^4\right)^{-1/4}\\ \label{4}
\times&\exp\left\{n a(\varphi)(r-1)\cos\varphi\right\}\left(\frac{\sqrt{(1-a(\varphi)^2)^2+(2a(\varphi)\sin\varphi)^2}}{1+a(\varphi)^2-2a(\varphi)\cos\varphi}\right)^n\\\nonumber
\times&\left\{\cos{\left(n\,f(\varphi)+g(\varphi)\right)}+o(1)\right\},
\end{align}
as \(n\rightarrow \infty\), where we introduce the expressions for \(0<\varphi<\frac{\pi}{r+1}\)
\begin{equation}\label{a}a(\varphi)=\left(\frac{\sin{(r+1)\varphi}}{\sin{(r-1)\varphi}}\right)^{1/2},\end{equation}
\begin{align*}&f(\varphi)=\frac{\pi}{2}-(r-1)a(\varphi)\sin\varphi+\arctan\left(\frac{1-a(\varphi)^2}{2a(\varphi)\sin\varphi}\right),\\
&g(\varphi)=\left(\frac{\pi}{2}+\nu_1+\cdots+\nu_{r-1}\right)\varphi-\kappa \arctan\left(\frac{a(\varphi)\sin(\varphi)}{1+ a(\varphi)\cos(\varphi)}\right)\\
&\quad\quad\quad-\frac{1}{2} \arctan\left(\frac{(r-1) a(\varphi)^2 \sin(2\varphi)}{r+1-(r-1)a(\varphi)^2 \cos(2\varphi)}\right).
\end{align*}

As one important application, in Section 3 we will use \(\eqref{4}\) to find explicit representations for the limiting distribution of the zeros, which turns out to coincide with the so-called Raney distribution \(R_{\frac{r+1}{2}, \frac{1}{2}}\) (see Theorem \ref{WA}). This distribution is supported on the interval \(\left[0, \frac{(r+1)^{(r+1)/2}}{2(r-1)^{(r-1)/2}}\right]\) and given by the moments
\[\frac{1}{(r+1)n+1} \binom{\frac{1}{2}((r+1)n+1)}{n}.\]
Moreover, we are able to obtain an elementary and explicit description for the density \(v\) of \(R_{\frac{r+1}{2}, \frac{1}{2}}\) and its distribution function in suitable coordinates (see Theorem \ref{C}). More precisely, if
 \[x=\sigma(\varphi)=\frac{\left(\sin{(r+1)\varphi}\right)^{(r+1)/2}}{\sin{2\varphi}\left(\sin{(r-1)\varphi}\right)^{(r-1)/2}},\quad 0<\varphi<\frac{\pi}{r+1},\]
then we have
\[v(x)=\frac{\sin 2\varphi \sin \varphi (\sin (r-1)\varphi)^{\frac{r}{2}-1}}{\pi (\sin(r+1)\varphi)^{\frac{r}{2}}}.\]

It is important to remark that recently in \cite{Forrester3} the applied techniques have been used by Forrester and Liu to obtain similar kinds of densities for the general class of Raney distributions.

As the present work complements the results in \cite{Neuschel2} by extending them to hypergeometric polynomials of the type \(_{2} F_{r}\), the methods we use here are an adaptation of the methods applied in \cite{Neuschel2}, which requires a considerable amount of new difficulties to overcome. Moreover, the stated results may be useful to study questions of universality (in the sense of random matrix theory) concerning the correlation kernels involved.

\section{Plancherel-Rotach formula for the average characteristic polynomials}

At the beginning of this section we state some auxiliary results. The first one we mention is a simple version of the multivariate method of saddle points (see \cite{Neuschel} for a short proof and discussion).
\begin{theorem}\label{MSP}
Let \(p\) and \(q\) be holomorphic functions on a complex domain \(D\subset \mathbb{C}^r\) with \([-a,a]^r \subset D\) for a number \(a>0\), and let
\[I(n)=\int\limits_{[-a,a]^r} e^{-n p(t)} q(t) dt,\]
where \(t=(t_1,\ldots,t_r)\). Moreover, let \(t=0\) be a simple saddle point of the function \(p\), which means that we have for the complex gradient
\[\grad p (0) = 0\]
and for the Hessian
\[\det\Hess p (0) \neq 0.\]
Furthermore, suppose that, considered as a real-valued function on \([-a,a]^r\), \(\Re [p (w)]\) attains its minimum exactly at the point \(t=0\) with \(\det\Re\Hess p (0) \neq 0\) and \(q(0)\neq 0\). Then we have
\begin{equation}\label{A}I(n) = \left(\frac{2\pi}{n}\right)^{r/2} e^{-n p(0)} \frac{q(0)}{\sqrt{\det\Hess p (0)}} (1+o(1)),
\end{equation}
as \(n\rightarrow \infty\).
\end{theorem}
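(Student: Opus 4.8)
The plan is to carry out a standard multivariate Laplace/saddle point argument in three stages: localize the integral to a small real neighborhood of the origin, rescale by $\sqrt{n}$, and then identify the limit as a Gaussian integral via dominated convergence. The multiplicative $(1+o(1))$ in \eqref{A} means only leading-order asymptotics are required, so no sharp error rate is needed and a soft domination argument suffices.

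First I would exploit the hypothesis that $\Re[p]$ attains its minimum on $[-a,a]^r$ exactly at $t=0$. Together with continuity and compactness this yields, for every $\delta>0$, a constant $\eta>0$ such that $\Re[p(t)-p(0)]\ge\eta$ whenever $t\in[-a,a]^r$ with $|t|\ge\delta$. Consequently the contribution to $I(n)$ from the region $|t|\ge\delta$ is bounded by $C e^{-n(\Re p(0)+\eta)}$, which is exponentially smaller than the anticipated main term of size $n^{-r/2}e^{-n\Re p(0)}$; hence this part is negligible and we may restrict the integral to a ball $|t|\le\delta$ with $\delta$ as small as we like.

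On this ball I would Taylor-expand $p$ about the origin. Since $\grad p(0)=0$, writing $A=\Hess p(0)$ gives $p(t)=p(0)+\tfrac12 t^T A t+R(t)$ with $R(t)=O(|t|^3)$. Because $\Re A=\Hess(\Re p)(0)$ is positive semidefinite (the origin being an interior minimum of $\Re p$) and nonsingular (this is $\det\Re\Hess p(0)\neq 0$), it is in fact positive definite; by taking $\delta$ small we then secure a uniform quadratic lower bound $\Re[p(t)-p(0)]\ge c|t|^2$ on $|t|\le\delta$ for some $c>0$. After the substitution $t=s/\sqrt{n}$ (so $dt=n^{-r/2}\,ds$) the localized integral becomes
\[ n^{-r/2}e^{-np(0)}\int_{|s|\le\delta\sqrt{n}} e^{-n(p(s/\sqrt{n})-p(0))}\,q(s/\sqrt{n})\,ds. \]
For each fixed $s$ the exponent tends to $-\tfrac12 s^T A s$ (the remainder scales as $nR(s/\sqrt{n})=O(|s|^3 n^{-1/2})\to 0$) and $q(s/\sqrt{n})\to q(0)$, while the quadratic lower bound supplies the integrable dominating function $\|q\|_\infty\, e^{-c|s|^2}$. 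Dominated convergence then yields
\[ I(n)=n^{-r/2}e^{-np(0)}q(0)\left(\int_{\mathbb{R}^r} e^{-\frac12 s^T A s}\,ds\right)(1+o(1)). \]

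It remains to evaluate the complex Gaussian integral, and this is where I expect the main difficulty to lie, since it is the genuinely multivariate part of the argument. For real positive definite $A$ the identity $\int_{\mathbb{R}^r} e^{-\frac12 s^T A s}\,ds=(2\pi)^{r/2}/\sqrt{\det A}$ is classical (diagonalize by an orthogonal change of variables). For the complex symmetric $A$ at hand, with only $\Re A\succ 0$, I would extend this by analytic continuation: the set $S$ of complex symmetric matrices with positive definite real part is path-connected and contains the real positive definite cone, and one checks that $\Re A\succ 0$ forces $\det A\neq 0$ throughout $S$, so $\sqrt{\det A}$ admits a single-valued holomorphic branch there; both sides of the identity are holomorphic in the entries of $A$ and agree on the cone, hence agree on all of $S$. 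This simultaneously fixes the branch of $\sqrt{\det A}$ to be the one positive for real positive definite $A$, which is precisely the branch of $\sqrt{\det\Hess p(0)}$ intended in \eqref{A}. Substituting the resulting value $(2\pi)^{r/2}/\sqrt{\det A}$ completes the proof. The two delicate points are the uniform quadratic bound underlying domination and the correct identification of the square-root branch in the complex Gaussian integral.
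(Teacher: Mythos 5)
Your argument is correct and is the standard localize--rescale--dominate Laplace/saddle-point proof; note that the paper does not actually prove this theorem but defers to the reference \cite{Neuschel}, so there is no internal proof to diverge from. Your determination of the square root branch, by analytically continuing the Gaussian identity over the (convex, hence simply connected) set of complex symmetric matrices with positive definite real part, is precisely the convention the authors fix in the remark following the theorem, and the two delicate points you flag (the uniform quadratic lower bound on $\Re[p(t)-p(0)]$ near the origin, and the nonvanishing of $\det A$ when $\Re A\succ 0$) are both handled correctly.
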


\begin{remark} The branch of the square root in (\ref{A}) is determined by the identity
\[\int\limits_{\mathbb{R}^r} e^{-\frac{1}{2} t^T\Hess p (0)t}dt=\frac{(2\pi)^{r/2}}{\sqrt{\det\Hess p (0)}}.\]
In general, the proper choice of the branch for the square root in (\ref{A}) can be described by fixing the arguments of the eigenvalues of \(\Hess p (0)\) in a correct manner (see, e.g., \cite{Fedoryuk}), which is related to the Maslov index.
\end{remark}

Next we prove a preliminary result on the location of the zeros of the polynomials in question.
\begin{lemma}\label{Zeros} Let \(r\in\mathbb{N}\) and \(\nu_1, \ldots, \nu_r ,\kappa \in \mathbb{N}_0\) be arbitrary integers, then all zeros of the polynomials \(F_n\) defined in (\ref{3}) are real and positive.
\end{lemma}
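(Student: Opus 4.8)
The plan is to realize \(F_n\) as the image of an explicitly real-rooted polynomial under a composition of P\'olya--Schur multiplier sequences, each of which preserves real-rootedness. Writing the coefficient of \((-x)^k\) in \(F_n\) as the product \(\binom{n}{k}\frac{(n+\kappa)_k}{k!}\) times \(\prod_{i=1}^{r-1}\frac{1}{(\nu_i+k)!}\), I would separate the two groups of factors: the first group defines a base polynomial whose zeros I can locate by hand, and the second group is to be inserted one factor at a time by multiplier sequences.

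First I would identify the base polynomial
\[F_n^{(0)}(x)=\sum_{k=0}^n\binom{n}{k}\frac{(n+\kappa)_k}{k!}(-x)^k={}_2F_1(-n,\,n+\kappa;\,1;\,x),\]
using \((-n)_k/k!=(-1)^k\binom{n}{k}\). This is, up to a constant, the Jacobi polynomial \(P_n^{(0,\kappa-1)}(1-2x)\), so for \(\kappa\ge 1\) all its zeros are simple and lie in \((-1,1)\) in the Jacobi variable, hence correspond to zeros of \(F_n^{(0)}\) in \((0,1)\); in particular they are real and positive. The borderline case \(\kappa=0\) (where \(\beta=-1\)) I would dispose of by the standard reduction \(P_n^{(0,-1)}(1-2x)=c\,(1-x)\,P_{n-1}^{(0,1)}(1-2x)\), which again yields only real positive zeros.

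The heart of the argument is the insertion of the factors \(1/(\nu_i+k)!\). For a fixed \(\nu\in\mathbb{N}_0\) I would show that \(\gamma_k:=1/(k+\nu)!\) is a multiplier sequence of the first kind, i.e. that multiplying the coefficients of any real-rooted polynomial by \(\gamma_k\) preserves real-rootedness. By the P\'olya--Schur theorem this reduces to checking that the generating function \(\Phi(x)=\sum_{k\ge 0}\gamma_k x^k/k!=\sum_{k\ge 0}\frac{x^k}{k!\,(k+\nu)!}\) lies in the Laguerre--P\'olya class with all zeros of one sign. Since \(\Phi(x)=x^{-\nu/2}I_\nu(2\sqrt{x})\), where \(I_\nu\) is the modified Bessel function, its zeros occur at \(x=-j_{\nu,m}^2/4<0\) (the points where \(I_\nu(2\sqrt x)\) vanishes on the imaginary axis), so \(\Phi\) is entire of order \(1/2\) with only negative real zeros and positive Taylor coefficients, which is exactly the required membership. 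Verifying this Bessel/Laguerre--P\'olya statement is the step I expect to be the main obstacle.

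Finally I would compose the sequences: since a product of multiplier sequences is again a multiplier sequence, applying \(\gamma_k=1/(k+\nu_i)!\) successively for \(i=1,\dots,r-1\) to \(F_n^{(0)}\) produces precisely \(F_n\) and keeps all zeros real. Positivity then follows from the sign pattern: the coefficients of \(F_n^{(0)}\) in powers of \(x\) strictly alternate in sign (because its zeros are positive), and multiplication by the positive numbers \(1/(\nu_i+k)!\) preserves this alternation, so that by Descartes' rule of signs a real-rooted polynomial with strictly alternating coefficients can only have positive zeros. This yields the assertion for \(F_n\).
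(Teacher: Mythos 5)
Your proof is correct and follows essentially the same route as the paper: both start from the Jacobi polynomial \(P_n^{(0,\kappa-1)}(1-2x)\) and then insert the factors \(1/(\nu_i+k)!\) one at a time as real-rootedness-preserving multiplier sequences. The only difference is that where the paper simply cites P\'olya--Szeg\H{o} (Part 5, Chap.\ 1, Probl.\ 63) for the multiplier property of \(1/(k+\nu)!\), you rederive it from the P\'olya--Schur theorem via the Bessel-function representation of \(\sum_{k\ge 0} x^k/(k!\,(k+\nu)!)\), and you make the positivity step explicit through Descartes' rule of signs.
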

\begin{proof} The positivity of the zeros clearly results from their interpretation as averages of positive eigenvalues. Nevertheless, we provide a short proof relying on analytical facts only. We start off by observing that all zeros of the following polynomials are real and positive
\[\sum_{k=0}^n \binom{n}{k}\frac{(n+\kappa)_k (-x)^k}{k!}=P_n^{(0,\kappa-1)}\left(1-2x\right),\]
as on the right-hand side we have Jacobi polynomials with parameters \(0\) and \(\kappa-1\geq-1\) evaluated at \(1-2x\). From this we can conclude for an arbitrary interger \(m\geq 0\) that all zeros of
\[\sum_{k=0}^n \binom{n}{k}\frac{(n+\kappa)_k (-x)^k}{k! (k+m)!}\]
are real and positive (use \cite{Polya2}, Part 5, Chap. 1, Probl. 63). Hence, by mathematical induction we can increase the number of factorials in the denominator of the coefficients and we immediately obtain the statement.
\end{proof}

Finally, we provide an inequality for a function of several variables which will be crucial for the proof of the main result in Theorem \ref{PRA}.
\begin{lemma}\label{Inequality} For \(0<\varphi<\frac{\pi}{r+1}\) let the function \(a(\varphi)\) be defined by (\ref{a}) and let the function \(b(\varphi)\) be defined by
\[b(\varphi)=\frac{a(\varphi)}{\left(1+2a(\varphi)\cos\varphi +a(\varphi)^2\right)^{1/2}}.\]
Moreover, let the function \(h:[-\pi,\pi]^r\rightarrow \mathbb{R}\) be defined by
\[h(t_1,\ldots,t_r)=\left\vert\exp\left\{a(\varphi)\sum_{j=2}^r e^{i t_j}\right\}\left(1-b(\varphi)e^{it_1}\right)^{-1}\left(1-\frac{\sin(r+1)\varphi}{b(\varphi)\sin2\varphi}\exp\left\{-i\sum_{j=1}^r t_j\right\}\right)\right\vert.\]
Then the function \(h\) attains its global maximum exactly in the two points 
\[\left(\arctan\left(\frac{\sin\varphi}{\cos\varphi + a(\varphi)}\right),\varphi,\ldots,\varphi\right)\]
and
\[-\left(\arctan\left(\frac{\sin\varphi}{\cos\varphi + a(\varphi)}\right),\varphi,\ldots,\varphi\right).\]
\end{lemma}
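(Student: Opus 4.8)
The plan is to pass to $H=\log h$, which is well defined since each factor of $h$ is nonvanishing, and to exploit that $H$ is $2\pi$-periodic in every variable. Writing $a=a(\varphi)$, $b=b(\varphi)$ and setting $c=\sin(r+1)\varphi/(b\sin2\varphi)$ and $s=\sum_{j=1}^r t_j$, the identities $|e^z|=e^{\Re z}$ and $|1-\rho e^{i\theta}|^2=1-2\rho\cos\theta+\rho^2$ give
\[H(t_1,\ldots,t_r)=a\sum_{j=2}^r\cos t_j-\tfrac12\log\bigl(1-2b\cos t_1+b^2\bigr)+\tfrac12\log\bigl(1-2c\cos s+c^2\bigr).\]
Since $H$ descends to the compact torus $(\mathbb{R}/2\pi\mathbb{Z})^r$ and is smooth, its global maximum is attained at an interior critical point, so it suffices to classify the critical points and compare their values. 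I would also record at the outset the reflection symmetry $H(-t_1,\ldots,-t_r)=H(t_1,\ldots,t_r)$: replacing each $t_j$ by $-t_j$ conjugates every factor of $h$ (all of $a,b,c$ are real) and hence preserves the modulus. This already explains why the maximizers come in the asserted $\pm$ pair and reduces the task to locating a single maximizer in the region $t_1>0$.

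Next I would compute the gradient. For $j\ge 2$ only the first and third terms depend on $t_j$, giving $\partial_{t_j}H=-a\sin t_j+\Phi(s)$ with $\Phi(s)=c\sin s/(1-2c\cos s+c^2)$, while $\partial_{t_1}H=-b\sin t_1/(1-2b\cos t_1+b^2)+\Phi(s)$. Thus at any critical point $a\sin t_2=\cdots=a\sin t_r=\Phi(s)$, so $t_2,\ldots,t_r$ share a common sine and each equals $\alpha$ or $\pi-\alpha$ modulo $2\pi$ for a single $\alpha$. To verify that the asserted point is critical I would use $a+e^{i\varphi}=\sqrt{D}\,e^{it_1}$, where $D=1+2a\cos\varphi+a^2$ and $t_1=\arctan(\sin\varphi/(\cos\varphi+a))$; since $b=a/\sqrt{D}$, a short computation gives $1-2b\cos t_1+b^2=1/D$ and $b\sin t_1=a\sin\varphi/D$, so the left-hand side of the $t_1$-equation equals $a\sin\varphi$. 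It then remains to check $\Phi(s)=a\sin\varphi$ at $t_2=\cdots=t_r=\varphi$, $s=t_1+(r-1)\varphi$. Writing $\sqrt{D}\,e^{is}=ae^{i(r-1)\varphi}+e^{ir\varphi}$ and unwinding the definition of $c$, this reduces, after clearing denominators and applying product-to-sum formulas together with the defining relation $a^2\sin(r-1)\varphi=\sin(r+1)\varphi$, to a trigonometric identity in $\varphi$; this is the computational core of the lemma, and I have checked that for $r=2$ it collapses exactly to the definition $a^2=1+2\cos2\varphi$.

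Finally, and this is where the real work lies, I would prove that this critical point is the unique global maximizer with $t_1>0$. The key first step is a symmetrization: for fixed $t_1$ and fixed $s$, the only term of $H$ sensitive to the individual values of $t_2,\ldots,t_r$ is $a\sum_{j\ge2}\cos t_j$, whose second variation along $\sum_{j\ge2}\delta t_j=0$ is $-a\sum_{j\ge2}\cos t_j\,(\delta t_j)^2$, negative definite wherever all $\cos t_j>0$. Hence any maximizer with $t_2,\ldots,t_r\in(-\tfrac{\pi}{2},\tfrac{\pi}{2})$ must lie on the diagonal $t_2=\cdots=t_r$, and on this diagonal the problem collapses to a two-variable function of $(t_1,\tau)$ whose critical points can be enumerated and whose maximizer I would identify with the asserted one by a second-derivative test. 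The main obstacle is excluding the remaining critical configurations, namely those in which some $t_j$ equal $\pi-\alpha$ or lie outside $(-\tfrac{\pi}{2},\tfrac{\pi}{2})$, together with the reflection-fixed points $t_j\in\{0,\pi\}$; here the degeneracy $\cos(\tfrac{\pi}{2}+u)+\cos(\tfrac{\pi}{2}-u)=0$ along $t_j+t_k=\pi$ shows such points cannot be isolated maxima, and a direct comparison of $H$-values rules them out. Establishing this global maximality and uniqueness is the delicate part, while finding the critical point and seeing it is a local maximum is routine; it is precisely this statement that furnishes the hypotheses of Theorem \ref{MSP} (the minimum of $\Re p$ being attained at isolated saddles), with the two reflected maxima producing the oscillatory cosine in \eqref{4}.
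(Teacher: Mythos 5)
Your skeleton is the same as the paper's: reduce, for fixed $c=\sum_{j\ge2}t_j$, to the diagonal $t_2=\cdots=t_r=c/(r-1)$, and then analyze a two-variable function of $(t_1,c)$. The symmetry remark, the explicit formula for $\log h$, the gradient computation, and the verification that the asserted point is critical are all fine (and the latter is in any case guaranteed by the saddle-point equations $\grad H=0$ from which the point was manufactured). But the two steps you yourself flag as ``where the real work lies'' are exactly the content of the lemma, and neither is carried out. First, your diagonalization is only local: the negative-definiteness of the second variation of $a\sum_{j\ge2}\cos t_j$ along $\sum\delta t_j=0$ applies only where all $\cos t_j>0$, so it does not show that the \emph{global} constrained maximum on the hyperplane $\{\sum_{j\ge2}t_j=c\}\cap[-\pi,\pi]^{r-1}$ sits on the diagonal; the configurations with some $t_j=\pi-\alpha$ or $|t_j|\ge\pi/2$ are dismissed with ``a direct comparison of $H$-values rules them out,'' which is not supplied. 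The paper instead asserts the global statement (max of $\sum\cos t_j$ on the slice is attained exactly at the diagonal, with value $(r-1)\cos\frac{c}{r-1}$) and, following \cite{Neuschel2}, restricts $c$ to $[-\pi,\pi]$.

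Second, and more seriously, the global analysis of the reduced two-variable function is left entirely as a plan (``critical points can be enumerated\dots by a second-derivative test''). A second-derivative test only gives local maximality; to prove the maximum is \emph{global} and attained \emph{only} at the two asserted points you need an actual mechanism. The paper's mechanism is specific: the M\"obius map $L(z)=(z-\alpha)/(z-\beta)$ with $\beta=1/b(\varphi)>1$ sends the unit circle to a circle of known center and radius, so $\max_{t_1}\tilde h(t_1,c)$ has a closed form $h^{\ast}(c)$ (center distance plus radius); one then shows that a certain factor of $\frac{d}{dc}h^{\ast}(c)$ is strictly decreasing on $(0,\pi)$, runs from positive to negative, and vanishes at $c=(r-1)\varphi$, which pins down the unique maximizing $c$, after which the optimal $t_1$ is identified as $\arctan\bigl(\sin\varphi/(\cos\varphi+a(\varphi))\bigr)$. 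Without this reduction of the $t_1$-maximization to an explicit function of $c$ alone (or an equivalent device), ``enumerate and compare'' does not close the argument. So the proposal has the right architecture but a genuine gap at the step that constitutes the lemma's actual difficulty.
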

\begin{proof} Let \(r>1\) and \(0<\varphi<\frac{\pi}{r+1}\) be fixed. For \(t=(t_1,\ldots,t_r)\in [-\pi,\pi]^r\) we have 
\begin{align*}h(t)=&\exp\left\{a(\varphi)\sum_{j=2}^r\cos t_j\right\} \left\vert\frac{1-\frac{\sin(r+1)\varphi}{b(\varphi)\sin 2\varphi} \exp\left\{-i\sum_{j=1}^r t_j \right\}}{1-b(\varphi)e^{i t_1}}\right\vert\\
=&\frac{1}{b(\varphi)} \exp\left\{a(\varphi) \sum_{j=2}^r \cos t_j \right\}\left\vert\frac{e^{i t_1}-\frac{\sin(r+1)\varphi}{b(\varphi)\sin 2\varphi} \exp\left\{-i\sum_{j=2}^r t_j \right\}}{e^{i t_1}-\frac{1}{b(\varphi)}}\right\vert.
\end{align*}
Now, for \(c \in [-(r-1)\pi,(r-1)\pi]\) we define the hyperplane 
\[H_c =\left\{(t_2,\ldots,t_r)\in [-\pi,\pi]^{r-1} \,:\, \sum_{j=2}^r t_j = c\right\}.\]
As can be seen by means of elementary calculus the restriction of the function
\[\sum_{j=2}^r \cos t_j\]
to the hyperplane \(H_c\) attains its global maximum exactly at the point \((t_2,\ldots,t_r)=\left(\frac{c}{r-1},\ldots,\frac{c}{r-1}\right)\)
with value \((r-1)\cos\frac{c}{r-1}\). Thus, we obtain
\begin{align*}&\max_{t\in [-\pi,\pi]^r} h(t)\\
&=\max_{t_1 \in [-\pi,\pi]}~~ \max_{c \in [-(r-1)\pi,(r-1)\pi]} \frac{1}{b(\varphi)}\exp\left\{a(\varphi) (r-1)\cos\frac{c}{r-1} \right\}\left\vert\frac{e^{i t_1}-\frac{\sin(r+1)\varphi}{b(\varphi)\sin 2\varphi} e^{-ic}}{e^{i t_1}-\frac{1}{b(\varphi)}}\right\vert.
\end{align*}
Following the arguments in the proof of Lemma 2.2 in \cite{Neuschel2}, we can restrict the interval of \(c\) to \([-\pi,\pi]\), so that we have to study the function
\[\tilde{h}(t_1,c)=\exp\left\{a(\varphi) (r-1)\cos\frac{c}{r-1} \right\}\left\vert\frac{e^{i t_1}-\frac{\sin(r+1)\varphi}{b(\varphi)\sin 2\varphi} e^{-ic}}{e^{i t_1}-\frac{1}{b(\varphi)}}\right\vert\]
on the domain \([-\pi,\pi]^2\). We will show that \(\tilde{h}\) attains its global maximum exactly in the two points 
\[\left(\arctan\left(\frac{\sin\varphi}{\cos \varphi +a(\varphi)}\right), (r-1)\varphi\right)\]
and 
\[-\left(\arctan\left(\frac{\sin\varphi}{\cos \varphi +a(\varphi)}\right), (r-1)\varphi\right).\]
To this end, by symmetry we can restrict the considerations to the domain \([0,\pi]^2\) and introduce the M\"{o}bius transform
\[L(z)=\frac{z-\alpha}{z-\beta},\]
where we have 
\[\alpha=\frac{\sin(r+1)\varphi}{b(\varphi)\sin 2\varphi} e^{-ic} \]
and
\[\beta=\frac{1}{b(\varphi)}>1.\]
It is not difficult to observe that \(L\) maps the unit circle to a circle with center 
\[\frac{\alpha\beta-1}{\beta^2-1}\]
and radius 
\[\frac{\vert\alpha-\beta\vert}{\beta^2-1}.\]
Thus, for a fixed \(c\in[0,\pi]\) the maximum of \(\tilde{h}(t_1,c)\) will be attained at a unique point \(t_1\in[0,\pi]\) with 
\begin{align*}\max_{t_1\in [0,\pi]} \tilde{h}(t_1,c)=&\exp\left\{a(\varphi) (r-1)\cos\frac{c}{r-1} \right\}\frac{1}{\beta^2-1}\\
&\times \left\{\left\vert\frac{1}{b(\varphi)}-\frac{\sin(r+1)\varphi}{b(\varphi)\sin 2\varphi} e^{-ic}\right\vert+\left\vert1-\frac{\sin(r+1)\varphi}{b(\varphi)^2\sin 2\varphi} e^{-ic}\right\vert\right\}.
\end{align*}
Let us denote the right-hand side of the latter expression by \(h^{\ast}(c)\) and study its extremal points. For the derivative we obtain
\begin{align*}\frac{d}{dc}h^{\ast}(c)=&\frac{\sin\frac{c}{r-1}}{\beta^2-1}\exp\left\{a(\varphi) (r-1)\cos\frac{c}{r-1} \right\}\\
&\times\left\{\left\vert\frac{1}{b(\varphi)}-\frac{\sin(r+1)\varphi}{b(\varphi)\sin 2\varphi} e^{-ic}\right\vert^{-1}+\left\vert1-\frac{\sin(r+1)\varphi}{b(\varphi)^2\sin 2\varphi} e^{-ic}\right\vert^{-1}\right\}\\
&\times\left\{\frac{\sin(r+1)\varphi \sin c}{b(\varphi)^2 \sin 2\varphi \sin\frac{c}{r-1}}-a(\varphi)\left\vert\frac{1}{b(\varphi)}-\frac{\sin(r+1)\varphi}{b(\varphi)\sin 2\varphi} e^{-ic}\right\vert \left\vert1-\frac{\sin(r+1)\varphi}{b(\varphi)^2\sin 2\varphi} e^{-ic}\right\vert\right\}.
\end{align*}
Now we can observe that the last bracket in the latter expression is a strictly decreasing function of \(c\) on \((0,\pi)\) which starts with a positive value and ends with a negative value. Moreover, it can be checked that it vanishes at the point \(c=(r-1)\varphi\), which shows that \(h^{\ast}\) attains its maximum exactly at this point. Finally, by a further computation it follows that the partial derivative \(\frac{\partial}{\partial t_1} \tilde{h} (t_1, (r-1)\varphi)\) vanishes at \(t_1=\arctan\left(\frac{\sin\varphi}{\cos\varphi+a(\varphi)}\right)\), from which the statement follows.

\end{proof}

Now we turn to the first main result which describes the asymptotic behavior of the rescaled polynomials \(F_n (n^{r-1} x)\).

\begin{theorem}[Asymptotics of Plancherel-Rotach type]\label{PRA} Let \(r\in\mathbb{N}\), \(r>1\) and \(\nu_1, \ldots, \nu_{r-1}, \kappa \in \mathbb{N}_0\) be arbitrary integers, then we have 
\begin{align}\nonumber  F_n (n^{r-1} x)=&\frac{2(-1)^n}{(2\pi)^{r/2}}\left(a(\varphi)n\right)^{-r/2-(\nu_1+\ldots+\nu_{r-1})}\left(1+2a(\varphi)\cos\varphi +a(\varphi)^2\right)^{\kappa/2} \\\nonumber
\times&\left((r+1)^2-2(r^2-1)a(\varphi)^2\cos{2\varphi}+(r-1)^2 a(\varphi)^4\right)^{-1/4}\\ \label{5}
\times&\exp\left\{n a(\varphi)(r-1)\cos\varphi\right\}\left(\frac{\sqrt{(1-a(\varphi)^2)^2+(2a(\varphi)\sin\varphi)^2}}{1+a(\varphi)^2-2a(\varphi)\cos\varphi}\right)^n\\\nonumber
\times&\left\{\cos{\left(n\,f(\varphi)+g(\varphi)\right)}+o(1)\right\},
\end{align}
as \(n\rightarrow \infty\), where we have for \(0<\varphi<\frac{\pi}{r+1}\)
\begin{equation}x=\sigma(\varphi)=\frac{\left(\sin{(r+1)\varphi}\right)^{(r+1)/2}}{\sin{2\varphi}\left(\sin{(r-1)\varphi}\right)^{(r-1)/2}},
\end{equation}
\begin{equation}\label{a2}a(\varphi)=\left(\frac{\sin{(r+1)\varphi}}{\sin{(r-1)\varphi}}\right)^{1/2},\end{equation}
\begin{align*}&f(\varphi)=\frac{\pi}{2}-(r-1)a(\varphi)\sin\varphi+\arctan\left(\frac{1-a(\varphi)^2}{2a(\varphi)\sin\varphi}\right),\\
&g(\varphi)=\left(\frac{\pi}{2}+\nu_1+\cdots+\nu_{r-1}\right)\varphi-\kappa \arctan\left(\frac{a(\varphi)\sin(\varphi)}{1+ a(\varphi)\cos(\varphi)}\right)\\
&\quad\quad\quad-\frac{1}{2} \arctan\left(\frac{(r-1) a(\varphi)^2 \sin(2\varphi)}{r+1-(r-1)a(\varphi)^2 \cos(2\varphi)}\right).
\end{align*}
\end{theorem}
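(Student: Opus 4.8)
The plan is to realize $F_n(n^{r-1}x)$ as an $r$-fold contour integral to which the multivariate saddle point method (Theorem \ref{MSP}) applies, and then to read off the Plancherel--Rotach data from two complex-conjugate saddle points. First I would produce the integral representation, treating the two ``numerator'' features $\binom{n}{k}$ and $(n+\kappa)_k$ separately from the $r$ reciprocal factorials. For the latter I use, for $j=1,\dots,r-1$,
\[\frac{1}{(\nu_j+k)!}=\frac{1}{2\pi i}\oint\frac{e^{w_{j+1}}}{w_{j+1}^{\nu_j+k+1}}\,dw_{j+1},\]
while the combination $(n+\kappa)_k/k!$ is encoded by the negative-binomial generating function
\[\frac{(n+\kappa)_k}{k!}=\frac{1}{2\pi i}\oint\frac{dw_1}{(1-w_1)^{n+\kappa}\,w_1^{k+1}}\]
on a small circle about the origin. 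Substituting into \eqref{3}, the surviving sum over $k$ is a plain binomial sum $\sum_{k=0}^{n}\binom{n}{k}(-n^{r-1}x/(w_1\cdots w_r))^k$, which collapses by the binomial theorem to give
\[F_n(n^{r-1}x)=\frac{1}{(2\pi i)^r}\oint\!\cdots\!\oint\frac{e^{w_2+\cdots+w_r}}{(1-w_1)^{n+\kappa}w_1\,w_2^{\nu_1+1}\cdots w_r^{\nu_{r-1}+1}}\Bigl(1-\frac{n^{r-1}x}{w_1\cdots w_r}\Bigr)^n dw,\]
an $r$-dimensional integral of exactly the required type; the replacement of one exponential factor by $(1-w_1)^{-1}$ is precisely what distinguishes the present $_2F_r$ case from the $_1F_r$ case of \cite{Neuschel2}.

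Next I would rescale, writing $w_j=a(\varphi)n\,\omega_j$ for $j\geq 2$, keeping $w_1$ of order one, and setting $x=\sigma(\varphi)$. Pulling out the factors that are exponential in $n$ puts the integrand in the form $e^{-np(w)}q(w)$ with
\[p=\log(1-w_1)-a(\varphi)\sum_{j=2}^r\omega_j-\log\Bigl(1-\frac{\sigma(\varphi)}{a(\varphi)^{r-1}w_1\,\omega_2\cdots\omega_r}\Bigr),\]
the residual $\kappa$- and $\nu_j$-powers going into the amplitude $q$. Solving $\grad p=0$ I expect the symmetric critical point $\omega_2=\cdots=\omega_r=e^{i\varphi}$ together with $w_1=b(\varphi)e^{it_1}$ and $t_1=\arctan(\sin\varphi/(\cos\varphi+a(\varphi)))$; one checks that these equations are consistent exactly when $a(\varphi)$ is given by \eqref{a2} and $x=\sigma(\varphi)$, and that the conjugate point is a second simple saddle. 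I would take the $w_1$-contour to be the circle of radius $b(\varphi)<1$ (which lies inside the unit disc, so the representation is legitimate) and the $\omega_j$-contours to be the unit circle, so that both saddles lie on the contour.

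The decisive analytic input is that, on these contours parametrized by the angles $(t_1,\dots,t_r)$, the modulus $e^{-\Re p}$ coincides, up to a constant, with the function $h$ of Lemma \ref{Inequality}; that lemma asserts that $h$ attains its global maximum only at the two conjugate points, which is precisely the hypothesis of Theorem \ref{MSP} that $\Re p$ be minimized only at the saddles. Granting in addition the non-degeneracy $\det\Hess p\neq 0$ and $q\neq 0$ there, I would deform to steepest descent through each saddle, apply Theorem \ref{MSP} in a local chart at each, and add the two contributions. Since the saddles are complex conjugate the two contributions are conjugate, so their sum is $2\Re$ of a single term and produces the factor $\cos(n\,f(\varphi)+g(\varphi))+o(1)$.

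What then remains is bookkeeping: evaluate $e^{-np}$, $q$ and $\sqrt{\det\Hess p}$ at the saddle and separate moduli from arguments. The modulus $|e^{w_2+\cdots+w_r}|$ yields the exponential $\exp\{na(\varphi)(r-1)\cos\varphi\}$, the moduli of $(1-w_1)^{-n}$ and of $(1-n^{r-1}x/(w_1\cdots w_r))^n$ combine into the $n$-th-power factor, the factor $|1-w_1|^{-\kappa}$ simplifies by an elementary identity to $(1+2a(\varphi)\cos\varphi+a(\varphi)^2)^{\kappa/2}$, and $|q|$ together with the $a(\varphi)n$-scaling and the $(2\pi/n)^{r/2}$ of Theorem \ref{MSP} produces $(a(\varphi)n)^{-r/2-(\nu_1+\cdots+\nu_{r-1})}$; the arguments assemble into $f(\varphi)$ and $g(\varphi)$. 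I expect the main obstacle to be the last step, the explicit computation of the complex Hessian determinant at the saddle, which yields the factor $((r+1)^2-2(r^2-1)a(\varphi)^2\cos 2\varphi+(r-1)^2a(\varphi)^4)^{-1/4}$ and a further contribution to $g$; getting the branch of its square root right is the delicate point flagged in the remark following Theorem \ref{MSP} (the Maslov-index issue), and it must be pinned down by tracking the arguments of the eigenvalues of $\Hess p$, using the reality and positivity of the zeros from Lemma \ref{Zeros} as a consistency check. The accompanying trigonometric simplifications reducing the conjugate-saddle phase to the stated closed forms for $f$ and $g$ are lengthy but routine.
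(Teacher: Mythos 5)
Your proposal follows essentially the same route as the paper's proof: the same two contour-integral representations for the coefficients, the binomial-theorem collapse to an $r$-fold integral, the rescaling and parametrization by circles of radii $b(\varphi)$ and $a(\varphi)$, the identification of the two conjugate saddles $\bigl(w/(w+1),w,\ldots,w\bigr)$ with $w=a(\varphi)e^{\pm i\varphi}$ solving $w^{r+1}-w^2x+x=0$, the appeal to Lemma \ref{Inequality} for the global-maximum hypothesis of Theorem \ref{MSP}, and the final real-part bookkeeping. The only substantive step you defer --- the explicit Hessian determinant and the positivity of $\det\Re\Hess p$ --- is handled in the paper by a closed-form determinantal identity for matrices with constant off-diagonal entries, but your plan is correct as stated.
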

\begin{proof}At first we establish a representation for the polynomials \(F_n\) as a multivariate complex contour integral which is suitable for determining the asymptotic behavior using the multivariate method of saddle points. To this
end, we express the coefficients of the polynomial \(F_n\) considering single contour integrals of the form
\[\frac{1}{N!}=\frac{1}{2\pi i} \int \frac{e^{z}}{z^{N+1}} dz,\]
\[\frac{(n+\kappa)_k}{k!}=\frac{1}{2\pi i} \int \frac{(1-z)^{-n-\kappa}}{z^{k+1}} dz,\]
where the paths of integration are positive-oriented curves around the origin (with radius less than one in the second integral). Using the binomial theorem we obtain
\begin{align*}&F_n(x)\\
=&\frac{1}{(2\pi i)^r} \int\limits_{\Gamma} \exp\left\{w_2+\ldots+w_r\right\} (1-w_1)^{-n} \left(1-\frac{x}{w_1\ldots w_r}\right)^n\frac{(1-w_1)^{-\kappa}}{w_1 w_2^{\nu_1+1}\ldots w_r^{\nu_{r-1}+1}} dW,
\end{align*}
with \(W=(w_1,\ldots,w_r)\), \(\Gamma=\gamma_1 \times \ldots \times \gamma_r\), and \(\gamma_j\) are positive-oriented contours around the origin (and \(\gamma_1\) has radius less than one). By a change of variables \(w_j \mapsto n w_j\) for \(j=2,\ldots, r\) and replacing \(x\) by \(n^{r-1}x\) we obtain
\begin{equation}\label{int} F_n(n^{r-1}x)=\frac{n^{-(\nu_1+\ldots+\nu_{r-1})}}{(2\pi i)^r}\int\limits_{\Gamma}\left\{H(w_1,\ldots,w_{r})\right\}^nQ(w_1,\ldots,w_{r})dW,
\end{equation}
where
\[H(w_1,\ldots,w_{r})=e^{w_2+\ldots+w_{r}}\frac{1}{1-w_1}\left(1-\frac{x}{w_1\cdot\ldots\cdot w_{r}}\right),\]
\[Q(w_1,\ldots,w_{r})=\frac{(1-w_1)^{-\kappa}}{w_1 w_2^{\nu_{1}+1}\cdot\ldots\cdot w_{r}^{\nu_{r-1}+1}}.\]
In order to study the multivariate saddle points of \(H\), we compute the complex partial derivatives
\[H_{w_1}=\frac{e^{w_2+\ldots +w_{r}}}{(1-w_1)^2 w_1^2 w_2\cdots w_r}\left\{w_1^2 w_2\cdots w_r-2 x w_1 +x\right\},\]
and for \(j=2,\ldots,r\)
\[H_{w_j}=\frac{e^{w_2+\ldots +w_{r}}}{(1-w_1) w_1 \cdots w_j^2 \dots w_r}\left\{w_1 \cdots w_j^2 \dots w_r-  x w_j +x\right\}.\]
As the multivariate saddle points are solutions of the equation \(\grad H = 0\), it is not difficult to see that every saddle point \((w_1,\ldots,w_r)\) is of the general form
\[(w_1,\ldots,w_r)=\left(\frac{w}{w+1},w,\ldots,w\right),\]
where \(w\) satisfies the algebraic equation
\begin{equation}\label{eq} w^{r+1}-w^2 x+x=0.
\end{equation}
Now, introducing polar coordinates for \(w\) and carefully studying the imaginary and the real part of equation (\ref{eq}) it turns out that using the parametrization
\[x=\sigma(\varphi)=\frac{\left(\sin{(r+1)\varphi}\right)^{(r+1)/2}}{\sin{2\varphi}\left(\sin{(r-1)\varphi}\right)^{(r-1)/2}},~~~0<\varphi<\frac{\pi}{r+1},\]
two roots of (\ref{eq}) are located at the points
\[w=a(\varphi) e^{i \varphi}\quad\quad\text{and}\quad\quad w=a(\varphi) e^{-i \varphi},\]
where \(a(\varphi)\) is defined in (\ref{a2}). Writing \(w(\varphi)=a(\varphi) e^{i \varphi}\), this means for \(x=\sigma(\varphi)\) we obtain two complex conjugate multivariate saddle points at
\begin{equation}\label{Saddles}\left(\frac{w(\varphi)}{w(\varphi)+1},w(\varphi),\ldots,w(\varphi)\right)\quad \text{and}\quad \left(\overline{\frac{w(\varphi)}{w(\varphi)+1}},\overline{w(\varphi)},\ldots,\overline{w(\varphi)}\right).
\end{equation}
A small computation shows 
\[\frac{w(\varphi)}{w(\varphi)+1}=b(\varphi) e^{i\theta},\]
where \(b(\varphi)\) is defined as in the statement of Lemma \ref{Inequality} by
\[b(\varphi)=\frac{a(\varphi)}{\left(1+2a(\varphi)\cos\varphi +a(\varphi)^2\right)^{1/2}}\]
and we have
\[\theta=\arctan\left(\frac{\sin(\varphi)}{\cos(\varphi)+a(\varphi)}\right).\]
Using the parameterizations 
\[w_1=b(\varphi)e^{it_1},~ t_1\in[-\pi,\pi]\]
and for \(j=2,\ldots,r\)
\[w_j=a(\varphi)e^{it_j},~ t_j\in[-\pi,\pi],\]
from (\ref{int}) we obtain for \(x=\sigma(\varphi)\) the integral representation
\begin{equation}\label{int2} F_n(n^{r-1}x)=\frac{n^{-(\nu_1+\ldots+\nu_{r-1})}}{(2\pi i)^r}\int\limits_{[-\pi,\pi]^r}\left\{\tilde{H}(t_1,\ldots,t_{r})\right\}^n \tilde{Q}(t_1,\ldots,t_{r})dT,
\end{equation}
where \(T=(t_1,\ldots,t_r)\),
\[\tilde{H}(t_1,\ldots,t_{r})=\exp\left\{a(\varphi)\sum_{j=2}^r e^{i t_j}\right\}\left(1-b(\varphi)e^{it_1}\right)^{-1}\left(1-\frac{\sin(r+1)\varphi}{b(\varphi)\sin2\varphi}\exp\left\{-i\sum_{j=1}^r t_j\right\}\right),\]
and 
\[\tilde{Q}(t_1,\ldots,t_r)=Q\left(b(\varphi) e^{it_1},a(\varphi)e^{i t_2}\ldots,a(\varphi)e^{i t_r}\right).\]
From Lemma \ref{Inequality} we know that the modulus \(h(t_1,\ldots,t_r)=\vert\tilde{H}(t_1,\ldots,t_r)\vert\) on \([-\pi,\pi]^r\) attains its global maximum value exactly at the two points corresponding to the saddle points in (\ref{Saddles}). Moreover, taking the geometry of the integrand in (\ref{int2}) into account, the contributions coming from both of these saddle points will be complex conjugates. So in order to establish the asymptotic behavior of (\ref{int2}), we can restrict our attention to a small neighbourhood \(U\) of the point 
\[S(\varphi)=\left(\arctan\left(\frac{\sin(\varphi)}{\cos(\varphi)+a(\varphi)}\right),\varphi,\ldots,\varphi\right).\]
On the neighbourhood \(U\) we can study the integral 
\[\int\limits_{U} e^{-n p(t_1,\ldots,t_r)} \tilde{Q}(t_1,\ldots,t_{r})dT,\]
where we have according to the definition of \(\tilde{H}\)
\[p(t_1,\ldots,t_r)=-a(\varphi)\sum_{j=2}^r e^{i t_j}+\log\left(1-b(\varphi)e^{i t_1}\right)-\log\left(1-\frac{\sin(r+1)\varphi}{b(\varphi)\sin (2\varphi)} \exp\left\{-i \sum_{j=1}^r t_j\right\}\right).\]
We have by construction \(\grad p (S(\varphi))=0\) and an elementary calculation yields for the second partial derivatives
\[\frac{\partial^2 p}{\partial t_1^2}(S(\varphi))=2w(\varphi),\]
\[\frac{\partial^2 p}{\partial t_j^2}(S(\varphi))=w(\varphi)-w(\varphi)(w(\varphi)-1),\quad j=2,\ldots,r,\]
and
\[\frac{\partial^2 p}{\partial t_j \partial t_k}(S(\varphi))=-w(\varphi)(w(\varphi)-1),\quad j\neq k.\]
Now using the determinantal identity
\[\det\begin{pmatrix}
a &  c  & \ldots & c\\
c  &  b & \ldots & c\\
\vdots & \vdots & \ddots & \vdots\\
c  &   c       &\ldots & b
\end{pmatrix}=(-1)^{r-1}\left(c-b\right)^{r-2}\left((r-1)c^2 -(r-2)ac-ab\right),
\]
where the determinant of an \(r \times r\) matrix is taken, we can explicitly evaluate the determinant of the Hessian of \(p\) at \(S(\varphi)\) as
\[\det \Hess p (S(\varphi))=w(\varphi)^r\left(r+1-(r-1)w(\varphi)^2\right)\neq 0,\]
for \(0<\varphi<\frac{\pi}{r+1}\). Moreover, in the same manner we obtain for the determinants of the real parts of the Hessian
\[\det\Re\Hess p(S(\varphi))=a(\varphi)^r\left(\cos \varphi\right)^{r-2}\left\{(r+1)\cos(\varphi)^2-(r-1)\frac{\sin(r+1)\varphi}{\sin(r-1)\varphi} \cos(2\varphi)^2\right\}.\]
The expression in the last bracket can be seen to be positive for \(0<\varphi<\frac{\pi}{r+1}\) in the following way: Using the inequality
\[(r-1)\tan(r+1)\varphi > (r+1)\tan(r-1)\varphi,\]
gives 
\[\frac{d}{d\varphi} \frac{\sin(r-1)\varphi}{\sin(r+1)\varphi}>0.\]
This implies 
\[\frac{\sin(r-1)\varphi}{\sin(r+1)\varphi}>\frac{r-1}{r+1},\]
and we can conclude
\[(r+1)\cos(\varphi)^2 \sin(r-1)\varphi > (r-1)\sin(r+1)\varphi \cos(\varphi)^2> (r-1)\sin(r+1)\varphi \cos(2\varphi)^2.\]
Thus, all conditions of Theorem \ref{MSP} are satisfied (where we consider \(S(\varphi)\) as the saddle point instead of the origin) and by an application of (\ref{A}) we obtain
\[\int\limits_{U} e^{-n p(t_1,\ldots,t_r)} \tilde{Q}(t_1,\ldots,t_{r})dT=\left(\frac{2\pi}{n}\right)^{r/2} e^{-n p(S(\varphi))} \frac{\tilde{Q}(S(\varphi))}{\sqrt{\det\Hess p (S(\varphi))}} \left(1+o(1)\right),\]
as \(n\rightarrow \infty\). Using the definitions of \(p, \tilde{Q}\) and \(S(\varphi)\), we otain
\begin{align}\nonumber&\int\limits_{U} e^{-n p(t_1,\ldots,t_r)} \tilde{Q}(t_1,\ldots,t_{r})dT\\
&\label{sad1}=\left(\frac{2\pi}{n}\right)^{r/2}\frac{\left(1+w(\varphi)\right)^{\kappa}}{w(\varphi)^{\nu_1+\ldots+\nu_{r-1}}}
\left\{\left(a(\varphi) e^{i\varphi}\right)^r \left(r+1-(r-1)a(\varphi)^2 e^{2 i \varphi}\right) \right\}^{-1/2} \\
&\nonumber\times\left\{\exp\left\{a(\varphi)(r-1)e^{i\varphi}\right\}\left(1-b(\varphi)e^{i \theta}\right)^{-1}\left(1-\frac{\sin(r+1)\varphi}{b(\varphi)\sin(2\varphi)}e^{-i((r-1)\varphi+\theta)}\right)\right\}^n\left(1+o(1)\right),
\end{align}
as \(n\rightarrow \infty\). Now, taking into account that the contribution from the second saddle point \(-S(\varphi)\) will be the conjugate complex expression, computing the real part of (\ref{sad1}) and bearing in mind the prefactors coming from (\ref{int2}) will finally lead to the asymptotic form for the polynomials \(F_n (n^{r-1}x)\) as stated in the theorem.
\end{proof}

For the purpose of illustration we take a look at a plot for the case \(r=3\), \(\kappa=2\), \(\nu_1 =2\), \(\nu_2=5\) and \(n=150\). Showing the interval \(\left[\frac{2\pi}{13}, \frac{\pi}{6}\right]\), in Fig. 1 the normalized polynomial \(\tilde{F}_n\) (solid line) and the associated cosine approximant \(c_n(\varphi)\) (dashed line) are plotted, where we have
\begin{align*}&\tilde{F}_n(\varphi)\\ 
&=\frac{F_n \left(n^{r-1}\frac{\left(\sin{(r+1)\varphi}\right)^{(r+1)/2}}{\sin{2\varphi}\left(\sin{(r-1)\varphi}\right)^{(r-1)/2}}\right) \left((r+1)^2-2(r^2-1)a(\varphi)^2\cos{2\varphi}+(r-1)^2 a(\varphi)^4\right)^{1/4}}{\frac{2(-1)^n}{(2\pi)^{r/2}}\left(a(\varphi)n\right)^{-r/2-(\nu_1+\ldots+\nu_{r-1})}\left(1+2a(\varphi)\cos\varphi +a(\varphi)^2\right)^{\kappa/2} }\\
&~~\times \exp\left\{-n a(\varphi)(r-1)\cos\varphi\right\}\left(\frac{\sqrt{(1-a(\varphi)^2)^2+(2a(\varphi)\sin\varphi)^2}}{1+a(\varphi)^2-2a(\varphi)\cos\varphi}\right)^{-n},
\end{align*}
and
\[c_n(\varphi)=\cos{\left(n\,f(\varphi)+g(\varphi)\right)},\]
where \(f(\varphi)\) and \(g(\varphi)\) are defined in the statement of Theorem \ref{PRA}.

\begin{figure}[!htb]
\centering
\includegraphics[scale=0.62]{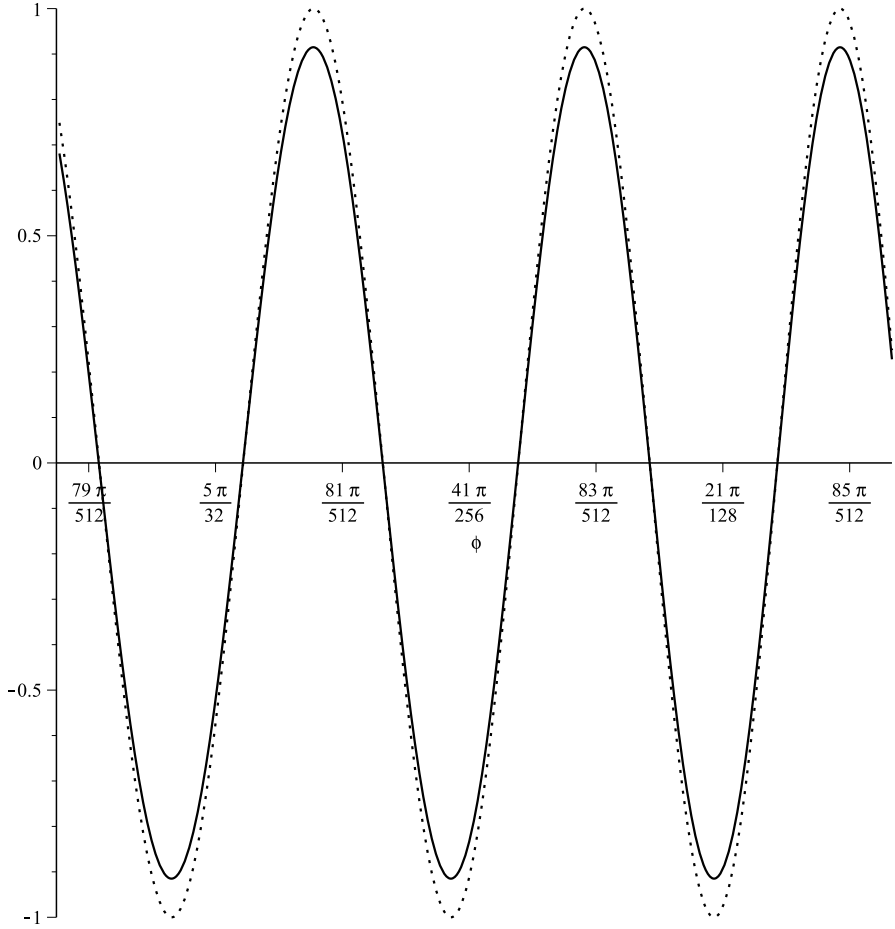}
\caption{The normalized polynomial \(\tilde{F}_n\) (solid line) and its cosine approximant \(c_n\) (dashed line).}
\label{Fig1}
\end{figure}

\bigskip

\section{Asymptotic zero distribution of the average characteristic polynomials}

The next aim is to study the behavior of the zeros of the rescaled polynomials \(F_n (n^{r-1}x)\). Therefore, like in Theorem \ref{PRA}, let the functions \(\sigma\) and \(f\) be defined by
\begin{equation}\label{sigma}\sigma:\left(0, \frac{\pi}{r+1}\right)\rightarrow \left(0,\frac{(r+1)^{(r+1)/2}}{2(r-1)^{(r-1)/2}}\right), \quad \sigma(\varphi)=\frac{\left(\sin{(r+1)\varphi}\right)^{(r+1)/2}}{\sin{2\varphi}\left(\sin{(r-1)\varphi}\right)^{(r-1)/2}},
\end{equation}
\begin{equation}\label{f}f:\left(0, \frac{\pi}{r+1}\right)\rightarrow \left(0, \pi\right), \quad f(\varphi)=\frac{\pi}{2}-(r-1)a(\varphi)\sin\varphi+\arctan\left(\frac{1-a(\varphi)^2}{2a(\varphi)\sin\varphi}\right).
\end{equation}
The function \(\sigma\) is a strictly decreasing bijection, whereas the function \(f\) is a strictly increasing bijection. Hence, the composition \(f\circ \sigma^{-1}\) is a strictly decreasing mapping from \(\left(0, \frac{(r+1)^{(r+1)/2}}{2(r-1)^{(r-1)/2}}\right)\) onto \(\left(0, \pi\right)\), which admits a continuous extension of the same kind to the interval \(\left[0, \frac{(r+1)^{(r+1)/2}}{2(r-1)^{(r-1)/2}}\right]\). Moreover, let the function \(V:\mathbb{R}\rightarrow [0,1]\) be defined by

\begin{equation}\label{V}V(x) = \begin{cases} 0 &\mbox{if } x \leq 0 \\
1-\frac{1}{\pi}\left(f\circ \sigma^{-1}\right)(x) & \mbox{if } 0<x<\frac{(r+1)^{(r+1)/2}}{2(r-1)^{(r-1)/2}}\\
1 &\mbox{if } x\geq \frac{(r+1)^{(r+1)/2}}{2(r-1)^{(r-1)/2}}.\end{cases}
\end{equation}
As it is not difficult to see that \(V\) is an increasing function on \(\mathbb{R}\) (with values in \([0, 1]\)) we can consider \(V\) as a probability distribution function (see Fig. 2 in the case \(r=3\)).

\begin{figure}[!htb]
\centering
\includegraphics[scale=0.62]{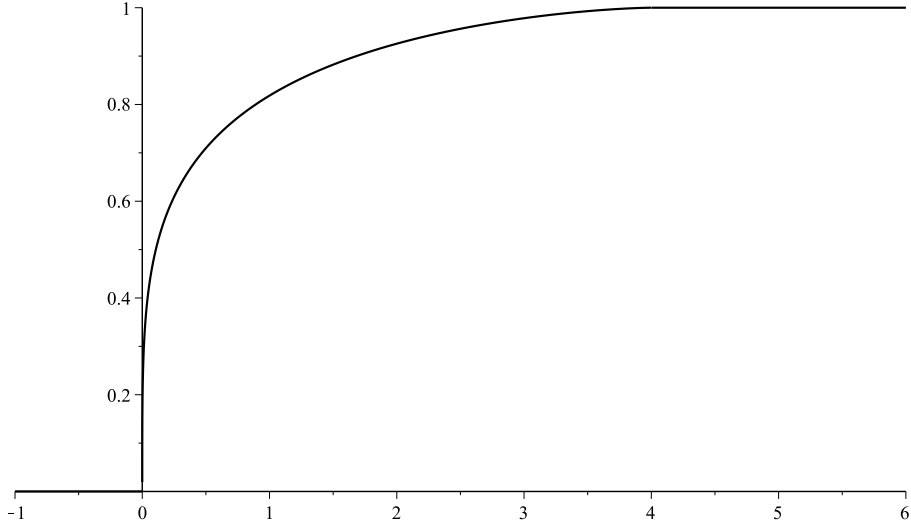}
\caption{The distribution function \(V\) in the case \(r=3\).}
\label{Fig2}
\end{figure}

In order to state the next theorem we introduce the class of Raney distributions, which is a natural generalization of the Fuss-Catalan distributions. For real \(\alpha\geq 1\), \(0<\beta\leq \alpha\) the Raney numbers \(R_{\alpha,\beta} (n)\) are defined by
\[R_{\alpha,\beta} (n)=\frac{\beta}{\alpha n +\beta} \binom{\alpha n+\beta}{n},\quad n=0,1,\ldots.\]
In \cite{Mlot} it is proved that these numbers form a moment sequence of some distribution with compact support on \([0,\infty)\) and the distributions are called Raney distributions \(R_{\alpha, \beta}\) (see, e.g., \cite{Penson}, \cite{Mlot2}, \cite{Forrester3}).

\begin{theorem}[Asymptotic zero distribution]\label{WA}Let \(r\in\mathbb{N}\), \(r>1\), \(\nu_1, \ldots, \nu_{r-1}, \kappa \in \mathbb{N}_0\) be arbitrary integers and let \((\mu_n)_n\) denote the sequence of normalized zero counting measures associated to the polynomials \(F_n (n^{r-1}x)\). Then the sequence \((\mu_n)_n\) converges in the weak-star sense to a unit measure \(\mu\) supported on \(\left[0, \frac{(r+1)^{(r+1)/2}}{2(r-1)^{(r-1)/2}}\right]\) which is defined by the distribution function \(V\) in (\ref{V}). Moreover, the limit measure \(\mu\) coincides with the Raney distribution \(R_{\frac{r+1}{2}, \frac{1}{2}}\).
\end{theorem}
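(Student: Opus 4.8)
The statement splits into two tasks: identifying the weak-star limit of $(\mu_n)_n$ with the measure $\mu$ carried by the distribution function $V$ of (\ref{V}), and then matching $\mu$ with $R_{\frac{r+1}{2},\frac12}$. For the first task the plan is to read the zeros off the oscillatory factor of Theorem \ref{PRA}. Fix a compact subinterval $[\alpha,\beta]\subset(0,\frac{\pi}{r+1})$; on it the non-oscillatory prefactor in (\ref{5}) is continuous and nowhere zero (its sign is $(-1)^n$, constant in $\varphi$), the error is $o(1)$ uniformly, and $g$ is bounded, so for large $n$ the sign of $F_n(n^{r-1}\sigma(\varphi))$ at the phase values $nf(\varphi)+g(\varphi)=k\pi$ agrees with that of $(-1)^k$ times the prefactor. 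Since $f$ is a strictly increasing bijection of $(0,\frac{\pi}{r+1})$ onto $(0,\pi)$, the phase grows by $n(f(\beta)-f(\alpha))+O(1)$ across $[\alpha,\beta]$, so consecutive such points have opposite signs and force at least $\frac{n}{\pi}(f(\beta)-f(\alpha))+O(1)$ sign changes, hence that many zeros of $F_n(n^{r-1}x)$ in the $x$-interval $\sigma([\alpha,\beta])$ (recall $\sigma$ is a decreasing bijection).

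By Lemma \ref{Zeros} the degree-$n$ polynomial $F_n(n^{r-1}x)$ has exactly $n$ real positive zeros, so the counts over a partition of $(0,\frac{\pi}{r+1})$ must sum to $n$. Applying the lower bound on each piece and letting the two outermost endpoints tend to $0$ and $\frac{\pi}{r+1}$, where $f\to0$ and $f\to\pi$, forces the proportion of zeros escaping any fixed compact interval to vanish; this simultaneously promotes every lower bound to the two-sided asymptotics $\frac1n N_n(\sigma([\alpha,\beta]))\to\frac1\pi(f(\beta)-f(\alpha))$ for the number $N_n$ of zeros in the indicated $x$-interval. Because $V(\sigma(\varphi))=1-\frac1\pi f(\varphi)$ by (\ref{V}), this is exactly convergence of the distribution functions of $\mu_n$ to $V$ at every continuity point, i.e.\ $\mu_n\to\mu$ in the weak-star sense, proving the first assertion.

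For the identification with the Raney distribution the plan is to compute the Cauchy transform $G_\mu(z)=\int\frac{d\mu(t)}{z-t}$ and to recognize a generating function. The saddle equation (\ref{eq}) furnishes the parametrization $x=\frac{w^{r+1}}{w^2-1}$ linking $x$ to the dominant saddle $w$, and evaluating the integrand $H$ of (\ref{int}) at the saddle $\left(\frac{w}{w+1},w,\dots,w\right)$ collapses it to $H=\frac{(1+w)e^{(r-1)w}}{1-w}$. Hence for real $x>M:=\frac{(r+1)^{(r+1)/2}}{2(r-1)^{(r-1)/2}}$, where the relevant branch is real with $w\to1$ as $x\to\infty$, the limiting logarithmic potential of $\mu$ equals $(r-1)w+\log\left|\frac{1+w}{1-w}\right|$ up to an additive constant; differentiating with $\frac{dx}{dw}=\frac{w^r((r-1)w^2-(r+1))}{(w^2-1)^2}$ produces, after the brackets cancel, the strikingly simple $G_\mu(x)=\frac{w^2-1}{w^r}=\frac{w}{x}$. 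Writing $z=1/x$, the ordinary generating function of the moments of $\mu$ is therefore $\Phi(z)=xG_\mu=w$, and setting $B=w^2$ gives at once $B=1+zB^{(r+1)/2}$ together with $\Phi=B^{1/2}$. By Lagrange inversion this is precisely the generating function whose coefficients are the Raney numbers $R_{\frac{r+1}{2},\frac12}(m)=\frac{1}{(r+1)m+1}\binom{\frac12((r+1)m+1)}{m}$, so all moments of $\mu$ coincide with them; as Raney distributions have compact support they are determined by their moments, whence $\mu=R_{\frac{r+1}{2},\frac12}$.

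The main obstacle is the rigorous justification of the Cauchy-transform step: one must legitimately pass to the limit in $\frac1n\log|F_n(n^{r-1}x)|$ for real $x>M$, where a single real saddle $w>1$ dominates (\ref{int}), and control the convergence of potentials uniformly near such $x$, thereby identifying $(U^\mu)'=G_\mu$. Equivalently, one may bypass the external saddle by computing the boundary jump of $w/x$ across $[0,M]$ and matching it to the density; this reduces to the calculus identity $-f'(\varphi)\,\sigma(\varphi)=a(\varphi)\sin\varphi\,\sigma'(\varphi)$, which ties the phase $f$ to the saddle data and is also the content of the explicit density in Theorem \ref{C}. A secondary, more routine point is the endpoint bookkeeping in the first task, namely verifying that only a vanishing proportion of zeros accumulates near $\varphi=0$ and $\varphi=\frac{\pi}{r+1}$, where (\ref{5}) degenerates.
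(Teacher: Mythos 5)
Your first step (extracting the zero counts from sign changes of the cosine approximant on compact $\varphi$-subintervals, summing the lower bounds to the total degree $n$ to upgrade them to two-sided asymptotics, and reading off $V(\sigma(\varphi))=1-\tfrac1\pi f(\varphi)$) is essentially the paper's first step, which likewise counts zeros of the cosine approximant following \cite{Neuschel2} (the paper routes the conclusion through Helly's selection principle and the Portmanteau theorem rather than through your direct two-sided bound, but the substance is the same). For the identification with $R_{\frac{r+1}{2},\frac12}$ you take a genuinely different route: you compute the Cauchy transform of $\mu$ from the exterior asymptotics of $\tfrac1n\log|F_n(n^{r-1}x)|$, obtain the strikingly clean $G_\mu(x)=w/x$ with $w^{r+1}=x(w^2-1)$, and then identify the moments as Raney numbers by Lagrange inversion applied to $B=1+zB^{(r+1)/2}$, $\Phi=B^{1/2}$; compact support then determines $\mu$ by its moments. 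The paper instead starts from the known fact (cited from \cite{Forrester3}) that $zF(z)$ for the Raney distribution satisfies the same algebraic equation, traces the boundary values $a(\varphi)e^{\pm i\varphi}$ on the cut, applies Stieltjes--Perron to get the density $v(\sigma(\varphi))$ in closed form, and matches it against $V'(\sigma(\varphi))=-\tfrac1\pi f'(\varphi)/\sigma'(\varphi)$ by an ``elementary but cumbersome'' computation. Your route is more self-contained (it does not presuppose the algebraic equation for the Raney Stieltjes transform) and yields the moment formula directly; the paper's route avoids any asymptotics off the support.

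The one substantive caveat is the step you yourself flag: Theorem \ref{PRA} only covers the oscillatory region $x=\sigma(\varphi)\in(0,M)$, so the limit $\tfrac1n\log|F_n(n^{r-1}x)|\to\log|H|$ at the dominant real saddle for $x>M$ (or $x<0$) is not available from anything proved in the paper and would require a separate, if easier, one-saddle analysis including a global maximum statement analogous to Lemma \ref{Inequality} for real $x$ outside the support. Without that, your primary route is incomplete; your proposed bypass (computing the jump of $w/x$ across $[0,M]$ and matching it to $V'$ via the identity $-f'(\varphi)\sigma(\varphi)=a(\varphi)\sin\varphi\,\sigma'(\varphi)$) closes the gap, but at that point it collapses onto the paper's own density-matching argument. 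Either way the formal computations you present (the collapse of $H$ to $\tfrac{(1+w)e^{(r-1)w}}{1-w}$ at the saddle, the cancellation giving $G_\mu=w/x$, and the Lagrange-inversion identification of the coefficients of $B^{1/2}$ with $\tfrac{1}{(r+1)m+1}\binom{\frac12((r+1)m+1)}{m}$) are correct.
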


\begin{proof} 
The main line of the proof follows similar arguments as the proof of Theorem 3.2 in \cite{Neuschel2}, therefore we only indicate some necessary modifications. Using the idea of counting the zeros of the cosine approximant in (\ref{PRA}) and involving the Portmanteau theorem and Helly's selection principle, it follows like in the first part of the proof of Theorem 3.2 in \cite{Neuschel2} that the measure \(\mu\) given by the distribution function \(V\) in (\ref{V}) indeed is the weak-star limit of the zero counting measures \(\mu_n\).

In order to show that this measure \(\mu\) coincides with the Raney distribution \(R_{\frac{r+1}{2}, \frac{1}{2}}\) we consider the Stieltjes transform of the latter
\[F(z)=\int\limits_{0}^{x^{\ast}}\frac{v(x)}{z-x}dx,\]
where \(v\) denotes its continuous density and we write \(x^{\ast}=\frac{(r+1)^{(r+1)/2}}{2(r-1)^{(r-1)/2}}\).

Next we use the fact (see, e.g., \cite{Forrester3}) that the function \(w(z)=zF(z)\) satisfies the equation
\begin{equation}\label{AE} w(z)^{r+1}-zw(z)^2+z=0.
\end{equation}
and admits an analytic continuation to \(\mathbb{C}\backslash [0,x^{\ast}]\), which has branch points exactly at the origin and at \(x^{\ast}\). Studying equation (\ref{AE}) shows that for \(z>x^{\ast}\) there are exactly two positive solutions, one of which converges to unity (this is \(w(z)\)) while the other solution tends to infinity like \(z^{\frac{1}{r-1}}\), as \(z\rightarrow \infty\) along the positive axis. These two solutions are connected to each other via the cut \((0,x^{\ast})\) and both branches converge to the value \(\left(\frac{r+1}{r-1}\right)^{\frac{1}{2}}\) as \(z\rightarrow x^{\ast}\), \(z>x^{\ast}\). Now, on the cut \((0,x^{\ast})\) we know from the proof of Theorem \ref{PRA} that using the parametrization
\[x=\sigma(\varphi)=\frac{\left(\sin{(r+1)\varphi}\right)^{(r+1)/2}}{\sin{2\varphi}\left(\sin{(r-1)\varphi}\right)^{(r-1)/2}},~~~0<\varphi<\frac{\pi}{r+1},\]
two roots of (\ref{AE}) are located at the points
\[w=a(\varphi) e^{i \varphi}\quad\quad\text{and}\quad\quad w=a(\varphi) e^{-i \varphi},\]
where \(a(\varphi)\) is defined in (\ref{a2}). Both solutions converge to \(\left(\frac{r+1}{r-1}\right)^{\frac{1}{2}}\) as \(\varphi \rightarrow 0\), from which we conclude that those solutions give the boundary values of \(w(z)\) as \(z\) approaches the cut from below or from above, respectively. Using the formula of Stieltjes-Perron we obtain for \(0<x<x^{\ast}\)
\begin{align*}v(x)&=\lim_{\epsilon \rightarrow 0+}\frac{1}{2\pi i} \left(F(x-i\epsilon)-F(x+i\epsilon)\right)\\
&=\lim_{\epsilon \rightarrow 0+}\frac{1}{2\pi i} \left(\frac{w(x-i\epsilon)}{x-i\epsilon}-\frac{w(x+i\epsilon)}{x+i\epsilon}\right).
\end{align*}
Putting \(x=\sigma(\varphi)\) we have for \(0<\varphi<\frac{\pi}{r+1}\)
\begin{align}\label{DEN}v(\sigma(\varphi))=\frac{\sin 2\varphi \sin \varphi (\sin (r-1)\varphi)^{\frac{r}{2}-1}}{\pi (\sin(r+1)\varphi)^{\frac{r}{2}}}.
\end{align}
On the other hand, deriving the distribution function \(V\) we obtain the following expression for its density for \(0<\varphi<\frac{\pi}{r+1}\)
\begin{equation}\label{DV}V'(\sigma(\varphi))=-\frac{1}{\pi}\frac{f'(\varphi)}{\sigma'(\varphi)}.
\end{equation}
Now, an elementary but cumbersome computation using (\ref{sigma}) and (\ref{f}) shows that the expressions (\ref{DEN}) and (\ref{DV}) coincide and thus the measure \(\mu\) is identified as the Raney distribution \(R_{\frac{r+1}{2}, \frac{1}{2}}\).
\end{proof}

The proof of Theorem \ref{WA} and especially the identity (\ref{DEN}) shows that after changing the coordinates from \(x\) to \(\varphi\) we obtain an elementary and explicit description for the density of the Raney distribution \(R_{\frac{r+1}{2}, \frac{1}{2}}\). We summarize this in the following theorem.

\begin{theorem}\label{C} Let \(v(x)\) denote the continuous density of the Raney distribution \(R_{\frac{r+1}{2}, \frac{1}{2}}\) defined on \(\left(0, \frac{(r+1)^{(r+1)/2}}{2(r-1)^{(r-1)/2}}\right)\). If
 \[x=\sigma(\varphi)=\frac{\left(\sin{(r+1)\varphi}\right)^{(r+1)/2}}{\sin{2\varphi}\left(\sin{(r-1)\varphi}\right)^{(r-1)/2}},\quad 0<\varphi<\frac{\pi}{r+1},\]
then we have
\[v(x)=\frac{\sin 2\varphi \sin \varphi (\sin (r-1)\varphi)^{\frac{r}{2}-1}}{\pi (\sin(r+1)\varphi)^{\frac{r}{2}}}.\]
In the same sense we also obtain an explicit and elementary expression for the corresponding distribution function
\[V(x)=\frac{1}{2}+\frac{(r-1)\sin\varphi}{\pi}\left(\frac{\sin(r+1)\varphi}{\sin(r-1)\varphi}\right)^{\frac{1}{2}}+\frac{1}{\pi}\arctan\left(\frac{\cos r\varphi}{\sqrt{\sin(r-1)\varphi \sin(r+1)\varphi}}\right).\]
\end{theorem}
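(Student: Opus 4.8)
The plan is to treat the two assertions of the theorem separately, since one of them is essentially already proved. The density formula is in fact in hand: equation (\ref{DEN}), established during the proof of Theorem \ref{WA} by applying Stieltjes--Perron inversion to the boundary values of the solution of the algebraic equation (\ref{AE}), is exactly the claimed expression for \(v(\sigma(\varphi))\). So the first half of the theorem requires no new argument beyond quoting (\ref{DEN}). The genuinely new content is the closed form for the distribution function \(V\), and this is what I would prove by a direct simplification of its defining formula (\ref{V}).

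First I would start from the definition of \(V\) on \(0<x<x^{\ast}\). Writing \(x=\sigma(\varphi)\) with \(\varphi=\sigma^{-1}(x)\), formula (\ref{V}) together with (\ref{f}) gives
\begin{equation*}
V(\sigma(\varphi))=1-\frac{1}{\pi}f(\varphi)=\frac{1}{2}+\frac{(r-1)a(\varphi)\sin\varphi}{\pi}-\frac{1}{\pi}\arctan\left(\frac{1-a(\varphi)^2}{2a(\varphi)\sin\varphi}\right).
\end{equation*}
The constant \(\tfrac{\pi}{2}\) in \(f\) produces the summand \(\tfrac12\), and since \(a(\varphi)=\left(\sin(r+1)\varphi/\sin(r-1)\varphi\right)^{1/2}\) by (\ref{a2}), the linear term \((r-1)a(\varphi)\sin\varphi/\pi\) is already exactly the second summand in the asserted formula. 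Thus it only remains to match the two arctangent terms.

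The key step is the identity
\begin{equation*}
-\arctan\left(\frac{1-a(\varphi)^2}{2a(\varphi)\sin\varphi}\right)=\arctan\left(\frac{\cos r\varphi}{\sqrt{\sin(r-1)\varphi\,\sin(r+1)\varphi}}\right).
\end{equation*}
By oddness of the principal arctangent the left side equals \(\arctan\!\left((a(\varphi)^2-1)/(2a(\varphi)\sin\varphi)\right)\), so it suffices to show that the two arguments agree. Here I would invoke the sum-to-product formula \(\sin(r+1)\varphi-\sin(r-1)\varphi=2\cos r\varphi\,\sin\varphi\), which gives \(a(\varphi)^2-1=2\cos r\varphi\,\sin\varphi/\sin(r-1)\varphi\); dividing by \(2a(\varphi)\sin\varphi\) and substituting the definition of \(a(\varphi)\) collapses the expression to \(\cos r\varphi/\sqrt{\sin(r-1)\varphi\,\sin(r+1)\varphi}\), as needed. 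Inserting this into the displayed expression for \(V(\sigma(\varphi))\) yields the claimed closed form.

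I do not anticipate a serious obstacle: once (\ref{DEN}) is granted, the whole argument is an elementary trigonometric reduction. The only point requiring care is the branch of the arctangent, but the reduction above is legitimate precisely because both sides use the principal branch and their arguments are literally equal, so no branch correction enters. As a consistency check I would verify the boundary behaviour, which is the most error-prone part of such closed-form claims: as \(\varphi\to 0^{+}\) one has \(\sigma(\varphi)\to x^{\ast}\) and the formula tends to \(\tfrac12+0+\tfrac12=1\), while as \(\varphi\to\pi/(r+1)^{-}\) one has \(\sigma(\varphi)\to 0\) and the formula tends to \(\tfrac12+0-\tfrac12=0\), in agreement with the definition (\ref{V}).
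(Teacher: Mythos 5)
Your proposal is correct and follows essentially the same route as the paper: the density formula is precisely identity (\ref{DEN}) obtained via Stieltjes--Perron inversion in the proof of Theorem \ref{WA}, and the closed form for \(V\) is the same elementary reduction of \(1-\tfrac{1}{\pi}f(\varphi)\) using \(\sin(r+1)\varphi-\sin(r-1)\varphi=2\cos r\varphi\,\sin\varphi\) that the paper leaves implicit when it summarizes the theorem as a consequence of the preceding proof. Your verification of the arctangent identity and the boundary values is accurate.
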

\begin{remark} Recently in \cite{Forrester3}, Forrester and Liu used this method of parameterization to obtain similar forms of the densities for the general class of Raney distributions.
\end{remark}
\begin{remark} It is interesting to remark that the asymptotic zero distribution of the rescaled average characteristic polynomials \(F_n (n^{r-1}x)\) described in Theorem \ref{WA} and Theorem \ref{C} coincides with the macroscopic density of eigenvalues of the corresponding random matrices \(Z_r^{*}Z_r\) defined in (\ref{2}). This density can be constructed in free probability theory by means of the so-called free multiplicative convolution \(\boxtimes\) by
\[R_{\frac{r+1}{2}, \frac{1}{2}}=FC_{r-1} \boxtimes R_{1, \frac{1}{2}},\]
where \(FC_{r-1}\) denotes the Fuss-Catalan distribution of order \(r-1\) and the Raney distribution \(R_{1, \frac{1}{2}}\) coincides with the arcsine measure transformed to \([0,1]\) (see also \cite{Mlot}). In case that there are more than one truncated Haar distributed unitary matrix involved in the product (\ref{2}), the limiting distributions leave the general class of Raney distributions and enter the class of Jacobi polynomial moment measures (see \cite{Gawronski2}).

\end{remark}

\end{document}